\makeatletter \@namedef{subjclassname@2010}{
  \textup{2010} Mathematics Subject Classification}
\newtheorem{thm}{Theorem}[section]
\newtheorem{lem}[thm]{Lemma}
\newtheorem{pro}[thm]{Proposition}
\newtheorem{conj}[thm]{Conjecture}
\theoremstyle{remark}
\theoremstyle{definition}
\newcommand{\R}{\mathbb{R}}
\begin{document}

\title[Commutators of Unbounded Operators]{Counterexamples Related to Commutators of Unbounded Operators}
\author[M. H. MORTAD]{Mohammed Hichem Mortad}

\dedicatory{}
\thanks{}
\date{}
\keywords{Commutators. Self-commutators. Bounded and Unbounded
Operators.}

\subjclass[2010]{Primary 47B47. Secondary 47A05, 47B25}

 \address{Department of
Mathematics, University of Oran 1, Ahmed Ben Bella, B.P. 1524, El
Menouar, Oran 31000, Algeria.\newline {\bf Preferred mailing
address}:
\newline Pr Mohammed Hichem Mortad \newline BP 7085 Seddikia Oran
\newline 31013 \newline Algeria}

\email{mhmortad@gmail.com, mortad.hichem@univ-oran1.dz.}

\begin{abstract}
The present paper is exclusively devoted to counterexamples about
commutators and self commutators of unbounded operators on a Hilbert
space. As a bonus, we provide a simpler counterexample than
McIntosh's famous example obtained some while ago.
\end{abstract}

\maketitle

\section{Introduction}

The formal commutator of two non necessarily bounded operators $A$
and $B$ is defined to be $AB-BA$. We have called it "formal" as,
unlike the bounded case, $AB-BA=0$ does not always imply the
(strong) commutativity of $A$ and $B$ when say $A$ and $B$ are
self-adjoint. The first such counterexample is due to Nelson
\cite{Nelson-Analytic-vectors}. Apparently, the first textbook to
include it is \cite{RS1}. The same example is developed in detail in
\cite{Schmudgen-Operator-algebra}, pp 257-258. Perhaps the simplest
counterexample is due to Schm\"{u}dgen and may be found in e.g.
\cite{SCHMUDG-book-2012}.

The self commutator of a densely defined operator $T$ is defined to
be $TT^*-T^*T$.

The main purpose of this paper is to exhibit counterexamples to
questions related to commutators and self-commutators as regards
boundedness, closedness and self-adjointness.

Readers throughout the paper will observe how fascinating the use of
matrices of unbounded operators helps to find such counterexamples.
The same approach has equally allowed us to find more interesting
counterexamples on a different topic. See
\cite{Mortad-Powers-Trivial-domains-CEXAMP}. Readers should be wary
that it took me some time before I came up with these relatively
simple examples.

We refer readers to \cite{tretetr-book-BLOCK} for properties and
results about matrices of unbounded operators. See also
\cite{Moller-Szafanriac-matri-unbounded} or
\cite{Ota-Schmudgen-2003-Matrices-Operators}. For the general theory
of unbounded operators, readers may wish to consult
\cite{SCHMUDG-book-2012} or \cite{Weidmann}. See also
\cite{Putnam-book-1967}. Finally, it is worth noticing that there is
an extensive work on estimating the norm of commutators of some
classes of bounded operators. We cite among others:
\cite{Fong-Self-Commut-1986}, \cite{kittaneh-2007-Commutator-JFA},
\cite{kittaneh-2008-norm-commuta-MZ} and
\cite{kittaneh-2008-COMMUT-IEOT}.

\section{Main Counterexamples}

We start with an auxiliary result which is also interesting in its
own.

\begin{pro}\label{B unbounded B2 unbounded abs value QU} There exists
a densely defined unbounded and closed operator $B$ such that $B^2$
and $|B|B$ are bounded whereas $B|B|$ is unbounded and closed.
\end{pro}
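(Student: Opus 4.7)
The plan is to realize $B$ as a $2\times 2$ block matrix acting on $H \oplus H$, where $A$ is an unbounded self-adjoint operator on some Hilbert space $H$ (for instance, multiplication by $x$ on $L^2(\R)$ with its natural maximal domain). I would set
\[
B = \begin{pmatrix} 0 & A \\ 0 & 0 \end{pmatrix}
\]
on the dense domain $H \oplus D(A)$. This strictly upper triangular shape immediately gives $B^2 = 0$, transfers closedness and unboundedness of $B$ directly from $A$, and, crucially, creates an asymmetry between multiplying by $|B|$ on the left and on the right.

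Next, since $A = A^*$, computing $B^*$, then $B^*B$, and taking the square root yields
\[
|B| = \begin{pmatrix} 0 & 0 \\ 0 & |A| \end{pmatrix}.
\]
The range of $B$ lies inside $H \oplus \{0\}$, which is annihilated by $|B|$, so $|B|B = 0$ on a dense domain and therefore extends to a bounded operator on $H \oplus H$. Multiplying in the opposite order produces
\[
B|B| = \begin{pmatrix} 0 & A|A| \\ 0 & 0 \end{pmatrix}
\]
on $H \oplus D(A|A|)$. The upper right entry is unbounded: by the functional calculus, $A|A|$ is self-adjoint with spectrum $\{\lambda|\lambda| : \lambda \in \sigma(A)\}$, which is unbounded whenever $\sigma(A)$ is.

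The remaining step is closedness of $B|B|$. Since $A|A|$ is self-adjoint (hence closed), a routine convergence argument for a block matrix with a single closed nonzero entry gives the desired conclusion. The main conceptual obstacle is the very first step: choosing a block shape that simultaneously forces $B^2$ and $|B|B$ to collapse while leaving $B|B|$ intact. Once one notices that a strictly upper triangular nilpotent block matrix does precisely this, every subsequent verification reduces to standard facts about self-adjoint operators and their spectral calculus.
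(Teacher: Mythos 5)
Your proposal is correct and follows essentially the same route as the paper: the same strictly upper triangular block $B=\bigl(\begin{smallmatrix} 0 & A\\ 0 & 0\end{smallmatrix}\bigr)$ on $H\oplus D(A)$, with $B^2=0$, $|B|B=0$, and $B|B|$ unbounded and closed. The only cosmetic difference is that the paper takes $A$ positive, so $|A|=A$ and the unbounded entry is $A^2$ rather than your $A|A|$; both verifications are identical in substance.
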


\begin{proof}Let $H$ be a complex Hilbert space and let $A$ be an unbounded self-adjoint and positive operator with domain
$D(A)\subset H$ (for instance $Af(x)=e^{x^2}f(x)$ on its maximal
domain on $L^2(\R)$). Let
\[B=\left(
      \begin{array}{cc}
        0 & A \\
        0 & 0 \\
      \end{array}
    \right)
\]
be defined on $H\oplus D(A)$. Then $B$ is closed and as seen before
$B^2=0$ on $H\oplus D(A)$. Now,
\[|B|=\left(
        \begin{array}{cc}
          0 & 0 \\
          0 & A \\
        \end{array}
      \right)\text{ and so }|B|B=\left(
                                   \begin{array}{cc}
                                     0 & 0_{D(A)} \\
                                     0 & 0_{D(A)} \\
                                   \end{array}
                                 \right),
\]
that is, $|B|B$ is bounded on $H\oplus D(A)$. However,
\[B|B|=\left(
      \begin{array}{cc}
        0 & A^2 \\
        0 & 0 \\
      \end{array}
    \right)\]
is clearly unbounded on $H\oplus D(A^2)$.

\end{proof}

\begin{pro}There are two densely defined unbounded and closed
operators $B$ and $C$ such that $CB-BC$ is bounded and unclosed
while $|C|B-B|C|$ is unbounded and closed.
\end{pro}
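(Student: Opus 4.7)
The most economical plan is to recycle the operator $B$ from Proposition~\ref{B unbounded B2 unbounded abs value QU} and simply take $C=B$. Then
\[
CB-BC=B^2-B^2,
\]
and by the previous construction $B^2=0$ on $D(B^2)=H\oplus D(A)$, so $CB-BC$ is the zero operator restricted to the dense but non-closed subspace $H\oplus D(A)$ of $H\oplus H$. This operator is bounded (it has norm $0$) but not closed, because its graph is $(H\oplus D(A))\times\{0\}$, which fails to be closed precisely because $D(A)$ is not closed in $H$.

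For the second requirement I would compute $|C|B-B|C|=|B|B-B|B|$ using the matrix descriptions already established. From Proposition~\ref{B unbounded B2 unbounded abs value QU}, $|B|B$ is the zero operator on $H\oplus D(A)$, while
\[
B|B|=\begin{pmatrix}0 & A^2\\ 0 & 0\end{pmatrix}
\]
on $H\oplus D(A^2)$. The intersection of the two natural domains is $H\oplus D(A^2)$, so on that domain
\[
|C|B-B|C|=-B|B|=\begin{pmatrix}0 & -A^2\\ 0 & 0\end{pmatrix},
\]
which is unbounded (as in the previous proposition) and closed, since $A^2$ is closed (being the square of an unbounded positive self-adjoint operator).

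The only point that needs a careful word is the distinction \emph{bounded vs.\ closed}: I would emphasise explicitly that a densely defined operator on a non-closed domain can be bounded without being closed, and that our $CB-BC$ is exactly such an example because its domain $H\oplus D(A)$ is properly contained in $H\oplus H$. I do not expect any real obstacle; the whole proof reduces to matrix bookkeeping, and the previous proposition does all the heavy lifting concerning $|B|B$ and $B|B|$.
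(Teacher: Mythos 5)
Your proposal is correct and follows essentially the same route as the paper: take $C=B$ from the preceding proposition, note $CB-BC=0$ on the non-closed domain $H\oplus D(A)$ (bounded, unclosed), and identify $|B|B-B|B|$ with $\mp A^2$ in the corner on $H\oplus D(A^2)$, which is unbounded and closed. The only difference is the (immaterial) sign, where your $-A^2$ is in fact the more careful bookkeeping.
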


\begin{proof}Let $B$ be as in Proposition \ref{B unbounded B2 unbounded abs value
QU} and set $C=B$. Then $CB-BC=0_{D(B^2)}$ is clearly bounded and
unclosed. By a glance at Proposition \ref{B unbounded B2 unbounded
abs value QU} again, we easily see that
\[|B|B-B|B|=\left(
              \begin{array}{cc}
                0 & A^2 \\
                0 & 0 \\
              \end{array}
            \right)
\]
and that it is closed (and unbounded) on $D(|B|B-B|B|)=H\oplus
D(A^2)$.

\end{proof}

\begin{lem}
There are two unbounded and self-adjoint operators $A$ and $B$ such
that $D(A)=D(B)$, $A^2-B^2$ is bounded but $AB-BA$ is unbounded.
\end{lem}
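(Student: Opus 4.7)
The plan is to continue with the $2\times 2$ matrix philosophy used in the two preceding propositions. Let $T$ be any unbounded self-adjoint operator on a Hilbert space $H$ (for instance the multiplication operator $Tf(x)=e^{x^{2}}f(x)$ on $L^{2}(\R)$ used in Proposition~\ref{B unbounded B2 unbounded abs value QU}). On $H\oplus H$ I would set
\[
A=\begin{pmatrix} 0 & T \\ T & 0 \end{pmatrix},\qquad B=\begin{pmatrix} T & 0 \\ 0 & -T \end{pmatrix},
\]
both with the common natural domain $D(A)=D(B)=D(T)\oplus D(T)$.

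The first step is to check self-adjointness. For $B=\operatorname{diag}(T,-T)$ this is immediate from the definition of the adjoint of a diagonal matrix of self-adjoint operators. For the anti-diagonal $A$, a direct calculation of $\langle Ax,y\rangle$ shows that $y\in D(A^{*})$ forces $y_{1},y_{2}\in D(T^{*})=D(T)$ and then $A^{*}y=Ay$, which is standard for anti-diagonal matrices with self-adjoint off-diagonal entries (see the references on matrices of unbounded operators cited in the introduction).

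The key computation is then to observe that the squares collapse: on $D(T^{2})\oplus D(T^{2})$ one has
\[
A^{2}=\begin{pmatrix} T^{2} & 0 \\ 0 & T^{2}\end{pmatrix}=B^{2},
\]
so that $A^{2}-B^{2}=0$ is trivially bounded. By contrast, a short matrix multiplication gives
\[
AB-BA=\begin{pmatrix} 0 & -2T^{2} \\ 2T^{2} & 0 \end{pmatrix}
\]
on the same domain $D(T^{2})\oplus D(T^{2})$, and this operator is unbounded since $T^{2}$ is unbounded.

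The only subtlety, and what I expect to be the main (though mild) obstacle, is a careful bookkeeping of domains: I need to verify that the natural domains of $A^{2}$, $B^{2}$, $AB$ and $BA$ all reduce to $D(T^{2})\oplus D(T^{2})$ so that no hidden restriction shrinks $A^{2}-B^{2}$ artificially, and also that $D(A)=D(B)$ as required by the statement. Both follow directly from the structure of the matrices, which is why this construction is cleaner than trying to find two unbounded self-adjoint operators on a single copy of $H$ with the desired discrepancy between commutator and difference of squares.
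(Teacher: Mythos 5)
Your construction is exactly the one the paper uses: the same anti-diagonal $A$ and diagonal $B$ built from a single unbounded self-adjoint $T$, with $A^{2}=B^{2}$ and the off-diagonal commutator $\pm 2T^{2}$. The proposal is correct and takes essentially the same approach as the paper's proof.
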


\begin{proof}
Let $T$ be any unbounded and self-adjoint operator with domain
$D(T)\subset H$ where $H$ is a Hilbert space. Next, define $A=\left(
                                                                   \begin{array}{cc}
                                                                     0 & T \\
                                                                     T & 0 \\
                                                                   \end{array}
                                                                 \right)$
                                                                 and
                                                                 $B=\left(
                                                                      \begin{array}{cc}
                                                                        T & 0 \\
                                                                        0 & -T \\
                                                                      \end{array}
                                                                    \right)$
                                                                    and
                                                                    so
                                                                    $D(A)=D(B)=D(T)\oplus
                                                                    D(T)$.
                                                                    Hence
                                                                    $A^2=B^2=\left(
                                                                      \begin{array}{cc}
                                                                        T^2 & 0 \\
                                                                        0 & T^2 \\
                                                                      \end{array}
                                                                    \right)$
                                                                    and
                                                                    so
                                                                    $A^2-B^2$
                                                                    is
                                                                    bounded
                                                                    (on
                                                                    $D(A^2)$)
                                                                    whilst
\[AB-BA=\left(
                                                                   \begin{array}{cc}
                                                                     0 & -2T^2 \\
                                                                     2T^2 & 0 \\
                                                                   \end{array}
                                                                 \right)\]
is obviously unbounded.
\end{proof}

We know that there exist two unbounded self-adjoint operators $A$
and $B$ such that $AB-BA$ is bounded (on its domain) while
$|A|B-B|A|$ is unbounded. The first (and apparently the only)
counterexample is due to McIntosh in
\cite{McIntosh-commutators-counterexample} who answered a question
raised by the great T. Kato. The example we are about to give here
is new and simpler than McIntosh's. Moreover, in our case both
$AB-BA$ and $|A|B-B|A|$ are even closed.

\begin{pro}\label{MCIntosh-NEw-EXAMPLE-MORTAD}
There exist two unbounded and self-adjoint operators $A$ and $B$
such that $AB-BA$ is bounded (and closed) whilst $|A|B-B|A|$ is
unbounded (and also closed).
\end{pro}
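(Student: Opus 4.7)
Following the matrix-of-unbounded-operators strategy of the previous propositions, my plan is to construct $A$ and $B$ as self-adjoint $2\times 2$ block matrices built from a single auxiliary unbounded self-adjoint operator $T$ on a Hilbert space $H$ (for instance $Tf(x)=e^{x^{2}}f(x)$ on $L^{2}(\mathbb{R})$). A natural try is
\[
A=\begin{pmatrix}0 & T\\ T & 0\end{pmatrix},\qquad
B=\begin{pmatrix}X & Y\\ Y^{*} & X\end{pmatrix}
\]
on $D(T)\oplus D(T)$, with $X=X^{*}$ and $Y$ a closed densely defined operator on $H$. Then $A$ is automatically self-adjoint and, since $A^{2}=\mathrm{diag}(T^{2},T^{2})$, one has $|A|=\mathrm{diag}(|T|,|T|)$.

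A direct block multiplication gives
\[
AB-BA=\begin{pmatrix}TY^{*}-YT & [T,X]\\ [T,X] & TY-Y^{*}T\end{pmatrix},\qquad
|A|B-B|A|=\begin{pmatrix}[|T|,X] & [|T|,Y]\\ [|T|,Y^{*}] & [|T|,X]\end{pmatrix}.
\]
Choosing $X$ to commute strongly with $T$ (hence with $|T|$) eliminates the $[T,X]$ and $[|T|,X]$ terms. The proposition then reduces to finding a non-self-adjoint closed $Y$ such that $TY^{*}-YT$ is bounded (and closed) while $[|T|,Y]$ is unbounded (and closed). It is crucial that $Y\neq Y^{*}$, since otherwise the combinations $TY^{*}-YT$ and $[T,Y]$ would coincide and the symmetry between the two commutators could not be broken.

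The main obstacle is precisely this scalar reduction. Writing $Y=Y_{1}+iY_{2}$ with $Y_{j}=Y_{j}^{*}$, one computes $TY^{*}-YT=[T,Y_{1}]-i\{T,Y_{2}\}$, so one must produce self-adjoint $T$ unbounded and $Y_{2}$ with $\{T,Y_{2}\}$ bounded while $[|T|,Y_{2}]$ is unbounded. This is essentially McIntosh's algebraic difficulty, and mere block-matrix bookkeeping does not dispose of it. My expectation is that the author bypasses the obstruction via a further substitution in the spirit of Proposition~\ref{B unbounded B2 unbounded abs value QU}: taking $T$ to be signed (e.g.\ $T=\mathrm{diag}(R,-R)$ with $R$ positive self-adjoint unbounded, so that $|T|=\mathrm{diag}(R,R)$ differs genuinely from $T$) and $Y$ of shift type, so that algebraic identities such as $Y^{2}=0$ and $TY^{*}=YT$ force the required cancellation in $AB-BA$ while leaving a single unbounded $R^{2}$-type entry in $|A|B-B|A|$.

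Once such explicit $A$ and $B$ are in hand, self-adjointness is immediate from the block structure, and closedness of both commutators should come down to observing that their non-zero entries are either the zero operator on the whole underlying Hilbert space, or of the form $c\,R^{2}$ on the natural domain $D(R^{2})$; the second is closed as a power of a positive self-adjoint operator, and the first is trivially closed and bounded.
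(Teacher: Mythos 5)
There is a genuine gap: your argument never actually produces the operators. After the block computation you correctly reduce the problem to ``find $T$ and $Y$ with $TY^{*}-YT$ bounded but $[|T|,Y]$ unbounded,'' and you yourself observe that this is essentially the original McIntosh difficulty; the final paragraph then only speculates (``my expectation is that the author bypasses the obstruction\dots'') about a cancellation mechanism based on identities like $Y^{2}=0$ and $TY^{*}=YT$, without exhibiting any concrete $T,Y$, without verifying that those identities can be arranged simultaneously with the required unboundedness of $[|T|,Y]$, and without checking closedness. As written, the proposal is a reformulation of the problem plus a conjecture about its solution, not a proof.

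The paper's resolution is of a different nature and does not involve any cancellation at all: it makes the commutator $AB-BA$ bounded \emph{vacuously}, by forcing its domain to be $\{0\}$. Concretely, it invokes Lemma \ref{mmm} (Dehimi--Mortad): there exist unbounded self-adjoint $C,D$, with $C$ positive and not boundedly invertible, such that $D(CD)=D(DC)=\{0\}$. Setting
\[
S=\begin{pmatrix}0 & C\\ C & 0\end{pmatrix},\quad T=\begin{pmatrix}D & 0\\ 0 & D\end{pmatrix},\quad R=\begin{pmatrix}C^{-1/2} & 0\\ 0 & C\end{pmatrix},\quad
A=\begin{pmatrix}0 & S\\ S & 0\end{pmatrix},\quad B=\begin{pmatrix}T & 0\\ 0 & R\end{pmatrix},
\]
one gets $AB-BA$ with off-diagonal entries built from $ST$ and $TS$, whose domains are $\{0\}$, so $AB-BA$ is trivially bounded and closed; meanwhile $|A|=\mathrm{diag}(S,S)$ and the block $SR-RS$ contains the entry $C^{2}-\sqrt{C}$, which is unbounded, so $|A|B-B|A|$ is unbounded. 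Without this trivial-domain idea (or an actual solution of the scalar problem you reduced to, which would in fact be a stronger, densely defined example), your proposal does not establish the proposition.
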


The counterexample is based on the following recently obtained
result:

\begin{lem}\label{mmm}(\cite{Dehimi-Mortad-Chernoff}) There are unbounded self-adjoint operators $A$ and $B$
such that
\[D(A^{-1}B)=D(BA^{-1})=\{0\}\]
(where $A^{-1}$ and $B^{-1}$ are not bounded).
\end{lem}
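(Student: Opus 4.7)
The plan is to reduce the two domain-triviality conditions to intersection conditions on the domains and ranges of $A$ and $B$, and then produce self-adjoint operators with those properties.

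\textbf{Step 1 (Reduction).} Since $A^{-1}$ and $B^{-1}$ exist, both $A$ and $B$ are injective, and by self-adjointness their ranges $\ran(A), \ran(B)$ are dense subspaces of $H$ (being the domains of the self-adjoint operators $A^{-1}, B^{-1}$). Unrolling the definitions,
\[
D(BA^{-1}) \;=\; \{\, y \in \ran(A) : A^{-1}y \in D(B)\,\} \;=\; A\bigl(D(A) \cap D(B)\bigr),
\]
\[
D(A^{-1}B) \;=\; \{\, x \in D(B) : Bx \in \ran(A)\,\} \;=\; B^{-1}\bigl(\ran(A) \cap \ran(B)\bigr).
\]
By injectivity of $A$ and $B$, the two required conditions become equivalent to
\[
D(A) \cap D(B) = \{0\} \quad\text{and}\quad \ran(A) \cap \ran(B) = \{0\}.
\]

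\textbf{Step 2 (Setting up the construction).} I would look for $A = T$ and $B = UTU^*$, where $T$ is a fixed unbounded self-adjoint injective operator with $T^{-1}$ also unbounded, and $U$ is a unitary to be chosen. A convenient model for $T$ is the diagonal operator $Te_n = \lambda_n e_n$ on $\ell^2(\mathbb{N})$ with positive eigenvalues whose values accumulate at both $0$ and $\infty$ (for example $\lambda_{2k-1} = k$ and $\lambda_{2k} = 1/k$); then $D(T)$ and $\ran(T)$ are two distinct proper dense subspaces of $H$. Since $B = UTU^*$ is self-adjoint with $D(B) = UD(T)$ and $\ran(B) = U\ran(T)$, the problem reduces to finding a unitary $U$ satisfying
\[
D(T) \cap U\,D(T) = \{0\} \quad \text{and} \quad \ran(T) \cap U\,\ran(T) = \{0\}.
\]

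\textbf{Step 3 (The main obstacle).} The hard part is the simultaneous satisfaction of the two intersection conditions. Each condition alone follows from the classical fact that for a proper dense subspace $V$ of an infinite-dimensional Hilbert space one can find a unitary $U$ with $V \cap UV = \{0\}$; but naive rotations that make $D(T) \cap UD(T)$ trivial tend to leave $\ran(T) \cap U\ran(T)$ nontrivial, since the defining weights of these two subspaces are reciprocals of one another. I would attack this either by a Baire-category argument in the unitary group (showing that each of the two ``bad'' sets of unitaries is meagre in the strong operator topology, so a generic $U$ satisfies both conditions), or by an explicit inductive construction in which $U$ is built coordinate by coordinate so as to rule out both kinds of intersection --- very much in the spirit of Chernoff's construction of a symmetric operator $T$ with $D(T^2) = \{0\}$. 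The precise construction in \cite{Dehimi-Mortad-Chernoff} presumably carries out one of these strategies in full.
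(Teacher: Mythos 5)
Your Step 1 is correct and clean: since $A$ and $B$ are injective with dense ranges $\ran(A)=D(A^{-1})$ and $\ran(B)=D(B^{-1})$, the two conditions $D(BA^{-1})=\{0\}$ and $D(A^{-1}B)=\{0\}$ do reduce exactly to $D(A)\cap D(B)=\{0\}$ and $\ran(A)\cap\ran(B)=\{0\}$, and the ansatz $B=UTU^*$ is a reasonable way to try to realize both. Note, for what it is worth, that the paper itself offers no proof of this lemma --- it is imported verbatim from \cite{Dehimi-Mortad-Chernoff} --- so there is no internal argument to compare yours against; the lemma's entire content is the explicit construction.

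That said, your proposal has a genuine gap precisely where the lemma's difficulty lies: Step 3 does not prove that a unitary $U$ exists with $D(T)\cap UD(T)=\{0\}$ \emph{and} $\ran(T)\cap U\ran(T)=\{0\}$ simultaneously. You correctly identify that the two subspaces are tied together (their defining weights are reciprocal), so the classical von Neumann--Dixmier fact that a single proper dense operator range can be rotated off itself does not immediately give both conditions at once. Neither of the two strategies you float is carried out, and neither is obviously citable: for the Baire-category route you would first have to show that $\{U:\, V\cap UV\neq\{0\}\}$ is meagre in a suitable topology on the unitary group (the condition $V\cap UV\neq\{0\}$ is not visibly $F_\sigma$, so even the topological setup needs an argument), and the ``explicit inductive construction'' is exactly the missing content. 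Ending with ``the reference presumably carries out one of these strategies'' concedes that the decisive step is unproven. As submitted, the proposal is a correct reformulation of the problem plus a plan, not a proof.
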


Now, we give the promised counterexample:

\begin{proof}

Let $R,S,T$ be three self-adjoint operators on a Hilbert space $H$
with domains $D(R)$, $D(S)$ and $D(T)$ respectively. Assume also
that $S$ is positive. Now, define on $H\oplus H$ the operators
\[A=\left(
      \begin{array}{cc}
        0 & S \\
        S & 0 \\
      \end{array}
    \right)\text{ and }
B=\left(
    \begin{array}{cc}
      T & 0 \\
      0 & R \\
    \end{array}
  \right)
\]
with domains $D(A)=D(S)\oplus D(S)$ and $D(B)=D(T)\oplus D(R)$
respectively. Hence
\[AB-BA=\left(
      \begin{array}{cc}
        0 & SR-TS \\
        ST-RS & 0 \\
      \end{array}
    \right).\]
    Since $|A|=\left(
    \begin{array}{cc}
      S & 0 \\
      0 & S \\
    \end{array}
  \right)$, it follows that
  \[|A|B-B|A|=\left(
    \begin{array}{cc}
      ST-TS & 0 \\
      0 & SR-RS \\
    \end{array}
  \right).\]

  To obtain the appropriate operators, let $C$ and $D$ be such that $D(CD)=D(DC)=\{0\}$ (as in
  Lemma \ref{mmm}). Remember that $C$ is self-adjoint, positive, unbounded and (not boundedly) invertible . Define now
  \[S=\left(
        \begin{array}{cc}
          0 & C \\
          C & 0 \\
        \end{array}
      \right)\text{ and }T=\left(
                             \begin{array}{cc}
                               D & 0 \\
                               0 & D \\
                             \end{array}
                           \right)
  \]
  and so
  \[ST=\left(
         \begin{array}{cc}
           0 & CD \\
           CD & 0 \\
         \end{array}
       \right)\text{ and }TS=\left(
         \begin{array}{cc}
           0 & DC \\
           DC & 0 \\
         \end{array}
       \right).
  \]
  Hence $D(ST)=D(TS)=\{0_{[L^2(\R)]^2}\}$. This says that $AB-BA$ is
  bounded on $L^2(\R)\oplus L^2(\R)\oplus L^2(\R)\oplus L^2(\R)$. In
  fact, $AB-BA$ is trivially bounded as it is only defined on
  $\{0\}$ and $AB-BA$ is therefore closed. In order that $|A|B-B|A|$ be
  unbounded, it suffices then to exhibit a self-adjoint and unbounded $R$ such
  that $SR-RS$ is unbounded. Consider $R=\left(
                                           \begin{array}{cc}
                                             C^{-\frac{1}{2}} & 0 \\
                                             0 & C \\
                                           \end{array}
                                         \right)$. Therefore,
  \[SR=\left(
         \begin{array}{cc}
           0 & C^2 \\
           \sqrt C & 0 \\
         \end{array}
       \right),~RS=\left(
         \begin{array}{cc}
           0 & \sqrt C \\
           C^2 & 0 \\
         \end{array}
       \right)\]
       \text{ and }
       \[SR-RS=\left(
         \begin{array}{cc}
           0 & C^2-\sqrt C \\
           \sqrt C-C^2 & 0 \\
         \end{array}
       \right).
  \]

Finally, $C^2-\sqrt C $ is unbounded (and self-adjoint). Indeed,
since $C$ is self-adjoint, it is unitarily equivalent to the
multiplication operator $M_{\varphi}$ by an unbounded and
real-valued (positive) function $\varphi$. Hence, $C^2-\sqrt C$ is
unitarily equivalent to the multiplication operator by the unbounded
real-valued $\varphi^2-\sqrt{\varphi}$. Thus, as $C^2-\sqrt C $ is
unbounded, $SR-RS$ too is unbounded and so  $|A|B-B|A|$ is equally
  unbounded (and closed), as coveted.
\end{proof}

Now, we consider the case of self-commutators. First, we show that
the self-commutator of a densely defined and closed operator may
only be defined at 0.

\begin{pro}There exists a densely defined and closed operator $T$ such that $D(TT^*)\cap D(TT^*)=\{0\}$ and hence
\[D(TT^*-T^*T)=\{0\}.\]
\end{pro}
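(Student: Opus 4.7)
\emph{Plan.} I would continue the paper's block-matrix methodology and assemble $T$ from the Dehimi--Mortad operators of Lemma \ref{mmm}. The crucial preliminary observation is that, by the very definitions of the two products,
\[
D(T^*T) \subseteq D(T) \quad \text{and} \quad D(TT^*) \subseteq D(T^*),
\]
so it is enough to exhibit a densely defined closed $T$ with $D(T)\cap D(T^*) = \{0\}$.

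First I would extract the domain disjointness hidden in Lemma \ref{mmm}. Let $A$ and $B$ be the unbounded self-adjoint operators it provides. Because $A$ is self-adjoint with (unbounded) inverse, it is injective and $D(A^{-1}) = \ran(A)$; consequently
\[
D(BA^{-1}) \;=\; \bigl\{Ax : x\in D(A)\cap D(B)\bigr\} \;=\; A\bigl(D(A)\cap D(B)\bigr).
\]
Injectivity of $A$ together with the hypothesis $D(BA^{-1}) = \{0\}$ then forces $D(A)\cap D(B) = \{0\}$. Next I would set, on $H\oplus H$,
\[
T \;=\; \begin{pmatrix} 0 & A \\ B & 0 \end{pmatrix}, \qquad D(T) = D(B)\oplus D(A).
\]
It is routine that $T$ is densely defined and closed, using the closedness of $A$ and $B$ coordinate by coordinate. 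A direct verification from the definition of the adjoint --- decoupled by setting either coordinate of the test vector equal to zero --- yields
\[
T^* \;=\; \begin{pmatrix} 0 & B \\ A & 0 \end{pmatrix}, \qquad D(T^*) = D(A)\oplus D(B).
\]
Hence
\[
D(T)\cap D(T^*) \;=\; \bigl(D(A)\cap D(B)\bigr)\oplus\bigl(D(A)\cap D(B)\bigr) \;=\; \{0\},
\]
and the preliminary observation forces $D(TT^*)\cap D(T^*T) = \{0\}$, so $D(TT^*-T^*T) = \{0\}$.

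The main technical concern is the identification of $T^*$: for general block operator matrices with unbounded entries, the ``formal'' adjoint matrix need not coincide with the genuine operator adjoint. Here it does, because the two off-diagonal positions act on independent coordinates of the test vector, so the boundedness requirement in the adjoint definition decouples cleanly into two separate adjoint problems for $A$ and $B$ (each of which is self-adjoint). Once this identification is in hand, the conclusion drops out immediately from the inclusion $D(T^*T)\cap D(TT^*)\subseteq D(T)\cap D(T^*)$.
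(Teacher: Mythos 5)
Your argument is correct and rests on the same construction as the paper's own proof: the off-diagonal block matrix $T=\left(\begin{smallmatrix} 0 & A \\ B & 0 \end{smallmatrix}\right)$ built from two unbounded self-adjoint operators whose domains intersect trivially, with $T^*=\left(\begin{smallmatrix} 0 & B \\ A & 0 \end{smallmatrix}\right)$. You differ in two places. First, the paper simply quotes Kosaki's example \cite{KOS} for a pair with $D(A)\cap D(B)=\{0\}$, whereas you extract such a pair from Lemma \ref{mmm} via $D(BA^{-1})=A\bigl(D(A)\cap D(B)\bigr)$ and the injectivity of $A$; this deduction is valid (the lemma does guarantee that $A^{-1}$ exists), though it routes through heavier machinery than the direct citation used in the paper. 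Second, the paper computes the products explicitly, $TT^*=\mathrm{diag}(A^2,B^2)$ and $T^*T=\mathrm{diag}(B^2,A^2)$, and then uses $D(A^2)\cap D(B^2)\subseteq D(A)\cap D(B)$, while you bypass any computation of $TT^*$ and $T^*T$ by the soft inclusions $D(TT^*)\subseteq D(T^*)$ and $D(T^*T)\subseteq D(T)$ combined with $D(T)\cap D(T^*)=\{0\}$; this is slightly more economical and in fact establishes the stronger fact that already $D(T)\cap D(T^*)=\{0\}$. Your attention to the identification of $T^*$ (formal adjoint matrix versus genuine adjoint) is well placed, and the decoupling argument you sketch for it is correct since the two off-diagonal entries act on independent coordinates and $A$, $B$ are self-adjoint.
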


\begin{proof}Consider two unbounded and self-adjoint operators $A$ and $B$
which obey $D(A)\cap D(B)=\{0\}$ (see e.g. \cite{KOS}). Set
$T=\left(
                                                                   \begin{array}{cc}
                                                                     0 & A \\
                                                                     B & 0 \\
                                                                   \end{array}
                                                                 \right)$ and so $T^*=\left(
                                                                   \begin{array}{cc}
                                                                     0 & B \\
                                                                     A & 0 \\
                                                                   \end{array}
                                                                 \right)$.
Hence
\[TT^*=\left(
              \begin{array}{cc}
                A^2 & 0 \\
                0 & B^2 \\
              \end{array}
            \right) \text{ and }T^*T=\left(
              \begin{array}{cc}
                B^2 & 0 \\
                0 & A^2 \\
              \end{array}
            \right).
\]
Therefore,
\[D(TT^*)\cap D(TT^*)=[D(A^2)\cap D(B^2)]\oplus [D(B^2)\cap D(A^2)]\]
and so
\[D(TT^*)\cap D(TT^*)\subset[D(A)\cap D(B)]\oplus [D(B)\cap D(A)]=\{(0,0)\},\]
as needed.
\end{proof}

In the next two counterexamples, we show that $TT^*-T^*T$ may be
bounded but $|T||T^*|-|T^*||T|$ may be not, and vice versa.

\begin{pro}
There exists a densely defined and closed operator $T$ such that
$TT^*-T^*T$ is unbounded but $|T||T^*|-|T^*||T|$ is bounded (even
zero on some domain!).
\end{pro}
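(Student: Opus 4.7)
The plan is to recycle the block-matrix construction from Proposition \ref{B unbounded B2 unbounded abs value QU}. Let $A$ be an unbounded positive self-adjoint operator on a Hilbert space $H$ (for example $Af(x)=e^{x^{2}}f(x)$ on its maximal domain in $L^{2}(\R)$), and set
\[
T=\left(\begin{array}{cc} 0 & A \\ 0 & 0 \end{array}\right)
\]
on $H\oplus D(A)$. Then $T$ is densely defined and closed, exactly as in Proposition \ref{B unbounded B2 unbounded abs value QU}, and its adjoint is
\[
T^{*}=\left(\begin{array}{cc} 0 & 0 \\ A & 0 \end{array}\right)
\]
on $D(A)\oplus H$.

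A direct block multiplication shows that $TT^{*}$ is the diagonal block operator with entries $A^{2}$ and $0$, while $T^{*}T$ is the diagonal block operator with entries $0$ and $A^{2}$. In particular $TT^{*}-T^{*}T$ carries $\pm A^{2}$ on its diagonal and is therefore unbounded. Applying the Borel functional calculus to each of these diagonal positive self-adjoint blocks then yields
\[
|T^{*}|=(TT^{*})^{1/2}=\left(\begin{array}{cc} A & 0 \\ 0 & 0 \end{array}\right),\qquad |T|=(T^{*}T)^{1/2}=\left(\begin{array}{cc} 0 & 0 \\ 0 & A \end{array}\right).
\]

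The crucial observation is now that $|T^{*}|$ acts only on the first coordinate and $|T|$ only on the second. Hence in either composition $|T||T^{*}|$ or $|T^{*}||T|$ the image of the first factor lies in the kernel of the second, so both products collapse to the zero operator on the natural product domain (e.g.\ $D(A)\oplus D(A)$). Consequently $|T||T^{*}|-|T^{*}||T|=0$ is bounded (in fact zero on a dense domain), as coveted. The only matter requiring any care is the domain bookkeeping for the block expressions, which proceeds verbatim as in the proof of Proposition \ref{B unbounded B2 unbounded abs value QU} and presents no real obstacle; the genuine content of the example is the \emph{coordinate separation} between $|T|$ and $|T^{*}|$ that is invisible at the level of $TT^{*}$ and $T^{*}T$.
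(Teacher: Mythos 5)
Your construction is correct and is essentially the paper's own example: the paper takes the transposed block matrix $T=\left(\begin{smallmatrix} 0 & 0 \\ A & 0 \end{smallmatrix}\right)$ on $D(A)\oplus H$, which merely interchanges the roles of $T$ and $T^{*}$ (and hence the sign of the commutator), and it reaches the same conclusion that $TT^{*}-T^{*}T$ carries $\pm A^{2}$ on the diagonal while $|T||T^{*}|-|T^{*}||T|$ is the zero operator on $D(A)\oplus D(A)$. Your domain bookkeeping and the identification $|T|=(T^{*}T)^{1/2}$, $|T^{*}|=(TT^{*})^{1/2}$ are accurate, so no changes are needed.
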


\begin{proof}
Let $A$ be an unbounded, self-adjoint and positive operator with
domain $D(A)\subset H$. Define $T$ with domain $D(T)=D(A)\oplus H$
by
\[T=\left(
      \begin{array}{cc}
        0 & 0 \\
        A & 0 \\
      \end{array}
    \right)\text{ and so }T^*=\left(
      \begin{array}{cc}
        0 & A \\
        0 & 0 \\
      \end{array}
    \right).
\]
Hence
\[TT^*=\left(
         \begin{array}{cc}
           0 & 0 \\
           0 & A^2 \\
         \end{array}
       \right)\text{ and }
T^*T=\left(
         \begin{array}{cc}
           A^2 & 0 \\
           0 & 0 \\
         \end{array}
       \right).\]
Also,
\[|T||T^*|=\left(
             \begin{array}{cc}
              0  & 0_{D(A)} \\
               0 & 0_{D(A)} \\
             \end{array}
           \right)
\text{ and }|T^*||T|=\left(
             \begin{array}{cc}
               0_{D(A)} & 0 \\
              0_{D(A)}  & 0 \\
             \end{array}
           \right).\]
Thus,
\[TT^*-T^*T=\left(
         \begin{array}{cc}
           -A^2 & 0 \\
           0 & A^2 \\
         \end{array}
       \right)\]
is clearly unbounded (and self-adjoint) whereas $|T||T^*|-|T^*||T|$
is bounded (in fact, it is the zero operator on $D(A)\oplus D(A)$).
\end{proof}

\begin{pro}
There exists a densely defined and closed operator $T$ such that
$TT^*-T^*T$ is bounded whereas $|T||T^*|-|T^*||T|$ is unbounded.
\end{pro}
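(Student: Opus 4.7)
The plan is to mimic the block-matrix strategy used in the preceding propositions. Take
\[
T=\begin{pmatrix}0 & A\\ B & 0\end{pmatrix}\quad\text{on }D(B)\oplus D(A),
\]
where $A,B$ are two self-adjoint unbounded operators on a Hilbert space $H$ to be chosen. Since $A,B$ are self-adjoint (hence closed and densely defined), $T$ is densely defined and closed on $H\oplus H$, with $T^*=\begin{pmatrix}0 & B\\ A & 0\end{pmatrix}$. A direct block computation yields
\[
TT^*=\begin{pmatrix}A^2 & 0\\ 0 & B^2\end{pmatrix},\qquad T^*T=\begin{pmatrix}B^2 & 0\\ 0 & A^2\end{pmatrix},
\]
so that $TT^*-T^*T$ is the block-diagonal operator with blocks $\pm(A^2-B^2)$, and is therefore bounded (and closed) precisely when $A^2-B^2$ is bounded. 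Taking positive square roots block-wise gives $|T|=\mathrm{diag}(|B|,|A|)$ and $|T^*|=\mathrm{diag}(|A|,|B|)$, so $|T||T^*|-|T^*||T|$ is block-diagonal with blocks $\pm(|B||A|-|A||B|)$, and is unbounded exactly when $|A||B|-|B||A|$ is unbounded.

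The whole matter is thus reduced to exhibiting self-adjoint unbounded operators $A,B$ with $A^2-B^2$ bounded but $|A||B|-|B||A|$ unbounded. Taking $A,B\ge 0$ so that $|A|=A$ and $|B|=B$, the sub-problem becomes: find two positive self-adjoint unbounded operators $A$ and $B$ whose squares differ by a bounded operator while $AB-BA$ is unbounded. I would produce such a pair in the spirit of Proposition \ref{MCIntosh-NEw-EXAMPLE-MORTAD}, starting from a positive self-adjoint unbounded operator $H$ and a bounded self-adjoint $K$ with $H,\,H+K\ge 0$, and setting $A=\sqrt{H}$, $B=\sqrt{H+K}$. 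Then $A^2-B^2=-K$ is bounded by construction, and the possible failure of $AB-BA$ to be bounded is exactly the McIntosh phenomenon that the positive square-root map is not operator-Lipschitz on unbounded operators.

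The main obstacle is this last verification, namely that $AB-BA$ can be made genuinely unbounded on a dense domain while $A^2-B^2$ remains bounded. This is where a McIntosh-type construction (or a block adaptation built from the operators of Lemma \ref{mmm}) has to be invoked to embed the square-root obstruction into the spectral picture of $H$. Once that is in hand, the block operator $T$ above provides a densely defined and closed operator whose self-commutator $TT^*-T^*T$ is bounded, while $|T||T^*|-|T^*||T|$ inherits the unboundedness of $[|A|,|B|]$, as required.
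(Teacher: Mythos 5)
Your block-matrix reduction is sound and is essentially the paper's skeleton (the paper takes $T=\left(\begin{smallmatrix}0 & \sqrt{A}\\ \sqrt{B} & 0\end{smallmatrix}\right)$ with $A,B\geq 0$, i.e.\ your setup with the entries already written as square roots), but your proof stops exactly at the point where the real work lies. Having reduced everything to ``find positive self-adjoint unbounded $A,B$ with $A^2-B^2$ bounded and $|A||B|-|B||A|$ unbounded,'' you do not construct such a pair: the suggestion $A=\sqrt{H}$, $B=\sqrt{H+K}$ only guarantees $A^2-B^2=-K$ bounded, and the unboundedness of $[A,B]$ is precisely what you admit you cannot verify. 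Note moreover that this is not ``exactly the McIntosh phenomenon'': McIntosh's example in \cite{McIntosh-commutators-counterexample} produces $AB-BA$ bounded with $|A|B-B|A|$ unbounded, which is a different statement from the one you need (for positive $A$, $B=A+S$ with $S$ bounded, your requirement amounts to the anticommutator $AS+SA$ being bounded while the commutator $AS-SA$ is unbounded), so invoking it does not close the gap. As written, the proposal is an unfinished reduction, not a proof.

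The paper avoids this difficulty by a much cheaper mechanism that your reduction overlooks: one does not need the difference $TT^*-T^*T$ to be bounded through genuine cancellation; it suffices to make its domain trivial. Choosing $\sqrt{A}$ to be multiplication by $e^{x^2/4}$ on $L^2(\R)$ and $\sqrt{B}=\mathcal{F}^*\sqrt{A}\,\mathcal{F}$, Kosaki's result \cite{KOS} gives $D(A)\cap D(B)=\{0\}$, so $TT^*-T^*T=\mathrm{diag}(A-B,\,B-A)$ is defined only at $0$ and is trivially bounded, while the square roots $\sqrt{A},\sqrt{B}$ still share a rich common domain (Gaussians), on which one checks that $\sqrt{B}\sqrt{A}-\sqrt{A}\sqrt{B}$, and hence $|T||T^*|-|T^*||T|$, is unbounded. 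If you want to keep your stronger formulation (with $A^2-B^2$ a genuinely bounded, densely defined operator), you must actually produce the pair $(A,S)$ with $AS+SA$ bounded and $AS-SA$ unbounded, which is a separate and nontrivial construction; without it, the argument is incomplete.
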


\begin{proof}
Let $A$ and $B$ be two self-adjoint and positive operators with
domains $D(A)$ and $D(B)$. Set $T=\left(
                            \begin{array}{cc}
                              0 & \sqrt{A} \\
                              \sqrt{B} & 0 \\
                            \end{array}
                          \right)$. Hence
                          \[TT^*-T^*T=\left(
      \begin{array}{cc}
        A-B & 0 \\
        0 & B-A \\
      \end{array}
    \right)\]
and
\[|T||T^*|-|T^*||T|=\left(
        \begin{array}{cc}
          \sqrt{B}\sqrt{A}-\sqrt{A}\sqrt{B} & 0 \\
          0 & \sqrt{A}\sqrt{B}-\sqrt{B}\sqrt{A} \\
        \end{array}
      \right).\]

To get the desired counterexample, it suffices to have $D(A)\cap
D(B)=\{0\}$ and so $TT^*-T^*T$ becomes trivially bounded, and at the
same time, the same pair must make
$\sqrt{A}\sqrt{B}-\sqrt{B}\sqrt{A}$ unbounded (and thus
$|T||T^*|-|T^*||T|$ too becomes unbounded). One possible choice is
to consider
 $\sqrt A$
to be defined by
\[\sqrt Af(x)=e^{\frac{x^2}{4}}f(x)\]
on $D(\sqrt A)=\{f\in L^2(\R):e^{\frac{x^2}{4}}f\in L^2(\R)\}$. Then
$\sqrt A$ is self-adjoint, positive and boundedly invertible. Set
$\sqrt B:=\mathcal{F}^*\sqrt A\mathcal{F}$ where $\mathcal{F}$
denotes the usual $L^2(\R)$-Fourier Transform. Then $\sqrt B$ too is
self-adjoint, positive and boundedly invertible. We know that
$D(A)\cap D(B)=\{0\}$ (see e.g. \cite{KOS}). So, it only remains to
check that $\sqrt{B}\sqrt{A}-\sqrt{A}\sqrt{B}$ is in unbounded. A
way of seeing that is to consider the action of this operator on
some sequence of Gaussian functions. This finishes the proof.
\end{proof}

\section*{Conjecture}

Inspired by the pair $S$ and $R$ which appeared in Proposition
\ref{MCIntosh-NEw-EXAMPLE-MORTAD}, we propose the following
conjecture:

\begin{conj}
For any unbounded self-adjoint operator $A$, there is always a
bounded and self-adjoint operator $B$ such that $AB-BA$ is
unbounded.
\end{conj}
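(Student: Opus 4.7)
\noindent\emph{Proof proposal.} The plan is to use the spectral theorem to extract, from the unbounded spectrum of $A$, an orthonormal sequence $(e_k)$ of approximate eigenvectors whose ``eigenvalues'' $\lambda_k$ are arranged in pairs with divergent gaps, and then to let $B$ be the direct sum of rank-two swaps on these pairs.

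Since $A$ is unbounded and self-adjoint, $\sigma(A)$ is unbounded, so one can select disjoint unit intervals $I_k$ centred at points $\lambda_k\in\sigma(A)$ such that, after relabelling, $|\lambda_{2k}-\lambda_{2k-1}|\ge k$ for every $k$. The spectral projections $P_k=\mathbf{1}_{I_k}(A)$ are then pairwise orthogonal and nonzero, so any choice of unit vectors $e_k\in\mathrm{Range}(P_k)$ yields an orthonormal family lying in $D(A)$ with $Ae_k=\lambda_k e_k+\eta_k$, where $\eta_k\in\mathrm{Range}(P_k)$ and $\|\eta_k\|\le 1/2$. One then sets
\[
B=\sum_{k\ge 1}\bigl(\langle\cdot,e_{2k-1}\rangle\,e_{2k}+\langle\cdot,e_{2k}\rangle\,e_{2k-1}\bigr),
\]
the direct sum of rank-two swaps on the mutually orthogonal planes $\mathrm{span}(e_{2k-1},e_{2k})$, extended by zero on the remaining orthogonal complement; this $B$ is everywhere defined, self-adjoint and of norm at most $1$.

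Since $Be_{2k-1}=e_{2k}\in D(A)$, the vector $e_{2k-1}$ lies in $D(AB)\cap D(BA)$. Exploiting that $\mathrm{Range}(P_{2k-1})$ is orthogonal to each $e_j$ with $j\ne 2k-1$, a brief computation yields
\[
(AB-BA)\,e_{2k-1}=(\lambda_{2k}-\lambda_{2k-1})\,e_{2k}+\eta_{2k}-\langle\eta_{2k-1},e_{2k-1}\rangle\,e_{2k},
\]
so $\|(AB-BA)e_{2k-1}\|\ge|\lambda_{2k}-\lambda_{2k-1}|-1\ge k-1\to\infty$, proving that $AB-BA$ is unbounded.

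The delicate point is the first step when $A$ has no honest eigenvectors (for instance when $A$ has purely absolutely continuous spectrum): one must extract the vectors $e_k$ from spectral subspaces rather than from genuine eigenspaces, and it is crucial that the errors $\eta_k$ remain orthogonal to $e_j$ for $j\ne k$. This is guaranteed by the $A$-invariance of $\mathrm{Range}(P_k)$ together with the pairwise orthogonality of the $P_k$, so the argument should survive in full generality. The remaining manipulations are purely algebraic.
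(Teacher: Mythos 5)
Your argument is essentially correct, and it is worth stressing that the paper offers no proof of this statement at all: it is posed there as an open conjecture, motivated by the concrete pair $S$, $R$ appearing in the proof of Proposition \ref{MCIntosh-NEw-EXAMPLE-MORTAD}. So your spectral construction, written out with a little more care, would actually settle the conjecture affirmatively, and in a soft way that uses nothing about $A$ beyond the unboundedness of $\sigma(A)$. The key steps check out: since $\sigma(A)$ is unbounded you can choose $\mu_1<\mu_2<\cdots$ in $\sigma(A)$ (or a decreasing sequence, if the spectrum is only unbounded below) with $\mu_{j+1}>\mu_j+j+2$, which yields the disjoint unit intervals and the divergent paired gaps; each $P_k=\mathbf{1}_{I_k}(A)$ is nonzero because its centre lies in $\sigma(A)$, and $\mathrm{Range}(P_k)\subset D(A)$ with $\|(A-\lambda_k)P_k\|\le 1/2$ because $I_k$ is a bounded interval of length $1$ -- this is exactly what makes $\eta_k=(A-\lambda_k)e_k$ lie in $\mathrm{Range}(P_k)$ with $\|\eta_k\|\le 1/2$. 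The series defining $B$ converges strongly and $B$ is a self-adjoint contraction; since $e_{2k-1},e_{2k}\in D(A)$ and $Be_{2k-1}=e_{2k}$, the vector $e_{2k-1}$ lies in the natural domain $D(AB)\cap D(BA)$ (note $D(BA)=D(A)$ as $B$ is bounded), and your displayed identity for $(AB-BA)e_{2k-1}$ is correct because $B\eta_{2k-1}=\langle\eta_{2k-1},e_{2k-1}\rangle e_{2k}$, the error $\eta_{2k-1}$ being orthogonal to every $e_j$ with $j\ne 2k-1$; hence $\|(AB-BA)e_{2k-1}\|\ge |\lambda_{2k}-\lambda_{2k-1}|-1\ge k-1$, so $AB-BA$ is unbounded on its domain, which is all the statement requires (density of that domain is not needed). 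Compared with the paper, which only extrapolates the statement from the rigid pair $S$, $R$ built from operators with trivial product domains (Lemma \ref{mmm}), your route -- approximate eigenvectors from paired spectral intervals together with a block-diagonal swap -- is more elementary and completely general. The only polish I would add is to spell out the recursive selection of the spectral points, the inclusion $\mathrm{Range}(P_k)\subset D(A)$ for bounded intervals, and the strong convergence and self-adjointness of $B$.
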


\end{document}